\theoremstyle{plain}
\newtheorem{theorem}{Theorem}[section]
\newtheorem{lemma}[theorem]{Lemma}
\newtheorem{corollary}[theorem]{Corollary}
\newtheorem{definition}[theorem]{Definition}
\newtheorem{example}[theorem]{Example}
\theoremstyle{remark}
\newtheorem{remark}[theorem]{Remark}
\title[Tableaux and marriage problems]{A generalization of balanced tableaux and marriage problems with unique solutions}
\author{Brian T. Chan}
\address{
 Department of Mathematics \\
 University of British Columbia\\
 Vancouver BC V6T 1Z2, Canada}
\email{bchan@math.ubc.ca}
\date{\today}
\subjclass[2010]{05A20, 05C70, 05E45}
\keywords{balanced tableaux, Hall's marriage condition, shelling}
\thanks{The author was supported in part by the Natural Sciences and Engineering Research Council of Canada \includegraphics[scale = 0.2]{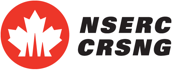} [funding reference number PGSD2 - 519022 - 2018].}
\newlength\cellsize \setlength\cellsize{15\unitlength}
\newcommand\cellify[1]{\def\thearg{#1}\def\nothing{}%
\ifx\thearg\nothing
\vrule width0pt height\cellsize depth0pt\else
\hbox to 0pt{\usebox2\hss}\fi%
\vbox to 15\unitlength{
\vss
\hbox to 15\unitlength{\hss$#1$\hss}
\vss}}
\newcommand\tableau[1]{\vtop{\let\\=\cr
\setlength\baselineskip{-16000pt}
\setlength\lineskiplimit{16000pt}
\setlength\lineskip{0pt}
\halign{&\cellify{##}\cr#1\crcr}}}
\newcommand\expath[1]{%
\hbox to 0pt{\usebox3\hss}%
\vbox to 15\unitlength{
\vss
\hbox to 15\unitlength{\hss$#1$\hss}
\vss}}
\newcommand\bas[1]{\omit \vbox to \cellsize{ \vss \hbox to \cellsize{\hss$#1$\hss} \vss}}
\begin{document}

\begin{abstract} We consider families of finite sets that we call shellable and that have been characterized by Chang and by Hirst and Hughes as being the families of sets that admit unique solutions to Hall's marriage problem. In this paper, we introduce a natural generalization of Edelman and Greene's balanced tableaux that involves families of sets that satisfy Hall's marriage Condition and certain words in $[n]^m$, then prove that shellable families can be characterized by a strong existence condition relating to this generalization. As a consequence of this characterization, we show that the average number of such generalized tableaux is given by a generalization of the hook-length formula.
\end{abstract}
\maketitle
%
\section{Introduction} \label{sec:intro}
\emph{Hall's Marriage Theorem} is a combinatorial theorem that characterises when a finite family of sets has a system of distinct representatives, which is also called a transversal. Hall \cite{HMT} proved that such a family has a system of distinct representatives if and only if this family satisfies the marriage condition. This theorem is known to be equivalent to at least six other theorems \cite{ECMT} which include Dilworth's Theorem, Menger's Theorem, and the Max-Flow Min-Cut Theorem. Hall Jr. proved \cite{HMTE} that Hall's Marriage Theorem also holds for arbitrary families of finite sets. Afterwards, Chang \cite{SDRC} noted how Hall Jr.'s work in \cite{HMTE} can be used to characterize marriage problems with unique solutions. Specifically, the families of sets that admit marriage problems with unique solutions were characterized \cite{SDRC}. Later on, Hirst and Hughes proved that such a characterization of marriage problems with unique solutions can be derived by only using a subsystem of second order arithmetic known as $RCA_0$ \cite{US}, and they showed that their work in \cite{US} can also be extended to marriage problems with a fixed finite number of solutions \cite{FS}. \\

Standard skew tableaux are well-known and intensively studied in algebraic combinatorics, for example \cite{MK, HSSI, NO, EC2}. Moreover, another class of tableaux was introduced by Edelman and Greene in \cite{BT, BTP}, where they defined balanced tableaux on partition shapes. In investigating the number of maximal chains in the weak Bruhat order of the symmetric group, Edelman and Greene proved \cite{BT, BTP} that the number of balanced tableaux of a given partition shape equals the number of standard Young tableaux of that shape. Since then, connections to random sorting networks \cite{RSN}, the Lascoux-Sch\"utzenberger tree \cite{DL}, and a generalization of balanced tableaux pertaining to Schubert polynomials \cite{BLSP} have been explored. Lastly, properties of products of hook-lengths have recently enjoyed some attention by Pak et.al. \cite{AST, HI} and by Swanson \cite{MT}. In particular, an inequality between products of hook-lengths and products of dual hook-lengths was derived \cite{AST, HI, MT}. \\

In this paper, we call the families of finite sets that admit marriage problems with unique solutions \emph{shellable} and give a new characterization of these families of sets by generalizing standard skew tableaux and Edelman and Greene's balanced tableaux to families with systems of distinct representatives, generalizing hook sets to members of such families, and using certain words in $[n]^m$. We then use our characterization of marriage problems with unique solutions to show that the average number of such generalized tableaux is given by a generalization of the hook-length formula. Afterwards, we indicate how our generalization of standard skew tableaux and balanced tableaux can be analysed using Naruse's Formula for skew tableaux and how such an approach can be extended to skew shifted shapes \cite{GK, AST, NO} and likely to certain $d$-complete posets \cite{GK, NO}. \\

\section{Preliminaries} \label{sec:prelims}

Throughout this paper, let $\mathbb{N}$ denote the set of positive integers and for all $n \in \mathbb{N}$, define $[n] = \{1,2,\dots,n\}$. Let $X$ and $Y$ be sets, and let $f : X \to Y$ be a function. Write $X \backslash Y = \{x \in X : x \notin Y\}$. Moreover, for all $X' \subseteq X$, let $f(X') = \{f(r) : r \in X'\}$, and, for all $Y' \subseteq Y$, let $f^{-1}(Y') = \{r \in Y : f(r) \in Y'\}$. If $X = Y$ and $f$ is a bijection, then let $f^{-1}$ denote the inverse of $f$. For all $X' \subseteq X$, let the \emph{restriction of $f$ to $X'$}, which we denote by $f|_{X'}$, be the function $g : X' \to Y$ defined by $g(r) = f(r)$ for all $r \in X'$. For all $m,n \in \mathbb{N}$, say that a function $f : [n] \to [m]$ is \emph{order-preserving} if for all $1 \leq i \leq j \leq n$, $f(i) \leq f(j)$. Lastly, we write examples of permutations using one-line notation. When describing families of sets, we write $\mathcal{F} = \{F_i : i \in I\}$ for some indexing set $I$, write $F \in \mathcal{F}$ to mean that $F = F_i$ for some $i \in I$, and call $F \in \mathcal{F}$ a \emph{member} of $\mathcal{F}$. We treat families of sets as multisets, so the members of $\mathcal{F}$ are counted with multiplicity (that is, $|\mathcal{F}| = |I|$ if $\mathcal{F} = \{F_i : i \in I\}$). Moreover, if $\mathcal{F}$ is a family of sets and if $X$ is a set, then a function, or map, $t : \mathcal{F} \to X$ is an assignment that assigns every member $F$ of $\mathcal{F}$ to exactly one element, denoted $t(F)$, of $X$. \\

An illustrative class of examples that we use in this paper will come from skew shapes. Hence, we recall them below and describe the notation we will use. A \emph{partition $\lambda$} is a weakly decreasing sequence of positive integers. We write $\lambda = (\lambda_1, \lambda_2, \dots, \lambda_\ell)$ to denote such a partition, where $\lambda_i \in \mathbb{N}$ for all $1 \leq i \leq \ell$. If $\lambda$ is a partition, then we will also represent it as a \emph{Young diagram}, which we also denote by $\lambda$. Specifically, the \emph{Young diagram of $\lambda = (\lambda_1, \lambda_2, \dots, \lambda_\ell)$} is a left-justified array of boxes with $\ell$ rows and with $\lambda_i$ boxes in the $i^{th}$ top-most row. If $\lambda = (\lambda_1, \lambda_2, \dots, \lambda_\ell)$ and $\mu = (\mu_1, \mu_2, \dots, \mu_{\ell'})$ are partitions such that $\ell' \leq \ell$ and $\mu_i \leq \lambda_i$ for all $1 \leq i \leq \ell'$, then define the \emph{skew shape} $\lambda / \mu$ to be the shape obtained from the Young diagram of $\lambda$ by deleting the $\mu_1$ left-most cells of row $1$ of $\lambda$, deleting the $\mu_2$ left-most cells of row $2$ of $\lambda$, etc. We also consider a Young diagram $\lambda$ as the skew shape $\lambda / \mu$ where $\mu$ is a partition called the \emph{empty partition}. Lastly, given a skew shape $\lambda / \mu$, we write $(i,j)$ to denote the cell of $\lambda / \mu$ located in the $i^{th}$ top-most row of $\lambda / \mu$ and the $j^{th}$ left-most column of $\lambda / \mu$.

\section{Shellable families of sets and words in $[n]^m$}

In this section, we consider families of sets that satisfy Hall's marriage condition. By interpreting the members of such families as generalized hook-sets and generalizing standard skew tableaux and Edelman and Greene's balanced tableaux to this setting, we give a new characterization of marriage problems with unique solutions. The characterization involves certain words on $[n]^m$ that are defined on such families of sets. We then show that the average number of such generalized tableaux is given by a generalization of the hook-length formula. Lastly, we indicate a connection between our results and known results relating to Naruse's Formula for standard skew tableaux. \\

A well-known notion for families of sets is the following.

\begin{definition} \label{transversal} Let $n \in \mathbb{N}$, and let $\mathcal{F}$ be a finite family of subsets of $[n]$. Then a \emph{transversal} of $\mathcal{F}$ is an injective function $t : \mathcal{F} \to [n]$ such that $t(F) \in F$ for all $F \in \mathcal{F}$. The set $\{t(F) : F \in \mathcal{F} \}$ is called a \emph{system of distinct representatives} of $\mathcal{F}$.
\end{definition}

\begin{example}\label{transversal example} If $\mathcal{F} = \{F_1, F_2\}$ is the family of sets on $\{1, 2\}$ where $F_1 = \{1,2\}$ and $F_2 = \{1,2\}$, then the injective function $t : \mathcal{F} \to \{1, 2\}$ defined by $t(F_1) = 1$ and $t(F_2) = 2$ is a transversal of $\mathcal{F}$. However, the function $t : \mathcal{F} \to \{1, 2\}$ defined by $t(F_1) = 1$ and $t(F_2) = 1$ is not a transversal. Furthermore, the family of sets $\{F_1, F_2, F_3\}$ on $\{1,2\}$ where $F_1 = \{1,2\}$, $F_2 = \{1,2\}$, and $F_3 = \{1,2\}$ does not have a transversal, and the power-set of $\{1, 2\}$ does not have a transversal.
\end{example}

Families of sets that have transversals are of great interest. Exemplary of this is \emph{Hall's Marriage Theorem}, which we present below.

\begin{definition}(Hall, \cite{HMT}) \label{marriage condition} Let $n \in \mathbb{N}$, and let $\mathcal{F}$ be a finite family of subsets of $[n]$. Then $\mathcal{F}$ satisfies the \emph{marriage condition} if for all subfamilies $\mathcal{F}'$ of $\mathcal{F}$,
$$ |\mathcal{F}'| \, \leq \, \bigg{|} \bigcup_{F \in \mathcal{F}'} F \, \bigg{|}. $$
\end{definition}

\begin{theorem}(Hall, \cite{HMT}) \label{marriage theorem} Let $n \in \mathbb{N}$, and let $\mathcal{F}$ be a family of non-empty subsets of $[n]$. Then $\mathcal{F}$ has a transversal if and only if $\mathcal{F}$ satisfies the marriage condition.
\end{theorem}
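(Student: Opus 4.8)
The plan is to give the classical induction argument, proving the two implications separately. The forward direction is immediate: if $t : \mathcal{F} \to [n]$ is a transversal and $\mathcal{F}' \subseteq \mathcal{F}$, then because $t$ is injective and $t(F) \in F \subseteq \bigcup_{G \in \mathcal{F}'} G$ for every $F \in \mathcal{F}'$, the set $\{t(F) : F \in \mathcal{F}'\}$ is a subset of $\bigcup_{G \in \mathcal{F}'} G$ of size exactly $|\mathcal{F}'|$, so the marriage condition holds. The substance is the converse, which I would establish by induction on $m = |\mathcal{F}|$; the case $m = 1$ is trivial, since a non-empty set has an element to select.

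For the inductive step, assume the statement for all families with fewer than $m$ members and distinguish two cases. In the \emph{surplus case}, every subfamily $\mathcal{F}'$ with $0 < |\mathcal{F}'| < m$ satisfies $|\bigcup_{F \in \mathcal{F}'} F| \geq |\mathcal{F}'| + 1$. Here, pick any member $F_0 \in \mathcal{F}$ and any $x \in F_0$, and set $\mathcal{F}^{*} = \{F \setminus \{x\} : F \in \mathcal{F},\ F \neq F_0\}$, a family of subsets of $[n] \setminus \{x\}$ with $m - 1$ members. Deleting the single point $x$ lowers the size of any union by at most one, and this together with the surplus assumption shows that every subfamily of $\mathcal{F}^{*}$ still satisfies the marriage condition; by the inductive hypothesis $\mathcal{F}^{*}$ has a transversal, and extending it by $F_0 \mapsto x$ yields a transversal of $\mathcal{F}$.

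In the remaining \emph{tight case}, there is a subfamily $\mathcal{F}_0$ with $0 < |\mathcal{F}_0| < m$ and $|\bigcup_{F \in \mathcal{F}_0} F| = |\mathcal{F}_0|$; write $A = \bigcup_{F \in \mathcal{F}_0} F$. Viewed as a family of subsets of $A$, the family $\mathcal{F}_0$ has fewer than $m$ members and satisfies the marriage condition, so by induction it has a transversal $t_0$, and since $|A| = |\mathcal{F}_0|$ the map $t_0$ hits every element of $A$. Now let $\mathcal{F}_1 = \{F \setminus A : F \in \mathcal{F} \setminus \mathcal{F}_0\}$, a family of subsets of $[n] \setminus A$ with fewer than $m$ members. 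For any $\mathcal{G} \subseteq \mathcal{F} \setminus \mathcal{F}_0$, the marriage condition of $\mathcal{F}$ applied to $\mathcal{F}_0 \cup \mathcal{G}$ gives $|\mathcal{F}_0| + |\mathcal{G}| \leq |\bigcup_{F \in \mathcal{F}_0 \cup \mathcal{G}} F| = |A| + |\bigcup_{F \in \mathcal{G}} (F \setminus A)|$, and since $|A| = |\mathcal{F}_0|$ this yields $|\bigcup_{F \in \mathcal{G}} (F \setminus A)| \geq |\mathcal{G}|$; so $\mathcal{F}_1$ satisfies the marriage condition and, by induction, has a transversal $t_1$ whose image lies in $[n] \setminus A$. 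Because $t_0$ and $t_1$ have disjoint images, their union is a transversal of $\mathcal{F}$, completing the induction.

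The main obstacle is the bookkeeping in the tight case: one must observe that $0 < |\mathcal{F}_0| < m$ forces both $\mathcal{F}_0$ and $\mathcal{F} \setminus \mathcal{F}_0$ to have strictly fewer than $m$ members, check that restricting $\mathcal{F}_0$ to $A$ preserves the marriage condition, note that the transversal of $\mathcal{F}_0$ is forced to exhaust $A$, and verify that $\mathcal{F}_1$ inherits the marriage condition; only then does the disjointness of the two partial images allow the transversals to be glued. An alternative would be to deduce the statement from K\"onig's theorem on bipartite graphs, or equivalently from max-flow min-cut, but the self-contained induction above sidesteps that machinery.
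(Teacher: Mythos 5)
Your argument is correct: it is the classical Halmos--Vaughan induction (trivial forward direction, then the surplus/tight case split for the converse), and every step checks out, including the verification that the deleted-point family and the family $\{F \setminus A\}$ inherit the marriage condition and that the two partial transversals glue disjointly. Note, however, that the paper does not prove this statement at all -- it is quoted as Hall's theorem with a citation to the original paper -- so there is no proof in the text to compare against; your write-up simply supplies the standard proof of the cited result. The only cosmetic point worth flagging is that the paper treats $\mathcal{F}$ as a multiset, so in the surplus case ``remove $F_0$'' should be read as removing one indexed copy rather than forming $\{F \in \mathcal{F} : F \neq F_0\}$, but this is bookkeeping and does not affect the argument.
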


In order to meaningfully use the families of sets in Hall's Marriage Theorem, we will define more structure on them.

\begin{definition}\label{structures} Let $n \in \mathbb{N}$, let $\mathcal{F}$ be a family of non-empty subsets of $[n]$, and let $t$ be a transversal of $\mathcal{F}$. Then a \emph{configuration} $f$ of $t$ is a function $f : [n] \to \mathbb{N}$ such that for all $F \in \mathcal{F}$,
$$ f(t(F)) \leq |F|. $$
Moreover, if $m \in [n]$, then say that a surjective map $\sigma : [n] \to [m]$ \emph{satisfies} $f$ if the following holds for all $F \in \mathcal{F}$. The positive integer $\sigma(t(F))$ is the $k^{th}$ smallest element of the set $\sigma(F) = \{\sigma(i) : i \in F\}$, where $k = f(t(F))$.
\end{definition}

\begin{remark} The above definition describes the words in $[n]^m$ that we will be considering in this paper. Specifically, the words in $[n]^m$ that we are interested in are the surjective maps $\sigma : [n] \to [m]$ where $m \leq n$.
\end{remark}

\begin{example} \label{structures ex} Let $\mathcal{F} = \{F_1, F_2\}$ be a family of sets on $\{1, 2\}$ where $F_1 = \{1,2\}$ and $F_2 = \{1,2\}$. As mentioned in Example \ref{transversal example}, the injective function $t : \mathcal{F} \to \{1, 2\}$ defined by $t(F_1) = 1$ and $t(F_2) = 2$ is a transversal of $\mathcal{F}$. Consider three configurations $f'$, $f''$, and $f'''$ of $t$ defined by $f'(1) = 1$ and $f'(2) = 1$, $f''(1) = 1$ and $f''(2) = 2$, and $f'''(1) = 2$ and $f'''(2) = 2$. \\

Regarding $f'$, the following can be said. The surjective map $\sigma : \{1, 2\} \to \{1\}$ satisfies $f'$ because $\sigma(1) = 1$ is the smallest element of $\sigma(F_1) = \sigma(\{1,2\}) = \{1\}$ and because $\sigma(2) = 1$ is the smallest element of $\sigma(F_2) = \sigma(\{1,2\}) = \{1\}$. However, no permutation $\sigma : \{1,2\} \to \{1,2\}$ satisfies $f'$ because supposing otherwise leads to a contradiction: Since $\sigma$ satisfies $f$, $f'(1) = 1$, and $F_1 = \{1,2\}$, it follows that $\sigma(1)$ is the smallest element of the two-element set $\sigma(\{1,2\}) = \{\sigma(1), \sigma(2)\}$, implying that $\sigma(1) < \sigma(2)$. Similarly, since $\sigma$ satisfies $f$, $f'(2) = 1$, and $F_2 = \{1,2\}$, it follows that $\sigma(2) < \sigma(1)$, a contradiction. \\

Regarding $f''$, the following can be said. The surjective map $\sigma : \{1,2\} \to \{1\}$ does not satisfy $f''$ because $f''(2) = 2$ implies that $\sigma(2)$ is the second smallest element of $\sigma(\{1,2\})$. But that is impossible because $\sigma(\{1,2\}) = \{1\}$. Similarly, the permutation $\sigma = 21$ does not satisfy $f''$ since $f''(2) = 2$ implies that $\sigma(2)$  is the second smallest element of $\sigma(F_2) = \sigma(\{1,2\}) = \{\sigma(1), \sigma(2)\}$, but $\sigma(2) = 1 < 2 = \sigma(1)$. However, the permutation $\sigma = 12$ does satisfy $f''$ because $\sigma(1) = 1$ is the smallest element of $\sigma(F_1) = \{1,2\}$ and because $\sigma(2) = 2$ is the second smallest element of $\sigma(F_2) = \{1,2\}$. \\

Regarding $f'''$, the following can be said. For all integers $m$ satisfying $1 \leq m \leq 2$ and for all surjective maps $\sigma : \{1,2\} \to [m]$, $\sigma$ does not satisfy $f''$. Suppose otherwise. Since $f'''(1) = 2$ and since $\sigma$ satisfies $f'''$, $\sigma(1)$ is the second smallest element of $\sigma(F_1) = \sigma(\{1,2\})$. Moreover, since $f'''(2) = 2$ and since $\sigma$ satisfies $f'''$, $\sigma(2)$ is the second smallest element of $\sigma(F_2) = \sigma(\{1,2\})$. But then, $|\sigma(\{1,2\})| = 2$ and every element of $\sigma(\{1,2\})$ is the largest element of $\sigma(\{1,2\})$, a contradiction.
\end{example}

In explaining examples for skew shapes, we will need the following definitions.

\begin{definition}\label{tableaux} Let $\lambda / \mu$ be a skew shape with $n$ cells, and let $1 \leq m \leq n$ be a positive integer. Then a \emph{generalized semi-standard skew tableau of shape} $\lambda / \mu$ is a filling $T$ of the cells of $\lambda / \mu$ with numbers from $[m]$ such that the set of entries in $T$ is $[m]$. Moreover, a \emph{generalized standard skew tableau of shape} $\lambda / \mu$ is a bijective filling of the cells of $\lambda / \mu$ with numbers from $[n]$. Moreover, a \emph{standard skew tableau of shape} $\lambda / \mu$ is a generalized standard skew tableau of shape $\lambda / \mu$ such that the entries along every row increase from left to right and the entries along every column increase from top to bottom.
\end{definition}

\begin{remark} For our purposes, we will regard generalized semi-standard skew tableaux as special cases of the surjective maps $\sigma : [n] \to [m]$ from Definition \ref{structures} and we will regard generalized standard skew tableaux are special cases of such maps $\sigma : [n] \to [m]$ when $m = n$. In particular, as we will explain later in this section, standard skew tableaux can be regarded as examples of permutations $\sigma : [n] \to [n]$ that satisfy a fixed configuration $f$.
\end{remark}

\begin{example} If $\lambda = (4,3,1)$ and $\mu = (2)$, then an example of a generalized standard skew tableau of shape $\lambda / \mu$ is
\begin{center}
$\tableau{&& 3 & 5 \\ 6 & 1 & 2 \\ 4}$
\end{center}
and an example of a standard skew tableau of shape $\lambda / \mu$ is
\begin{center}
$\tableau{&& 2 & 3 \\ 1 & 5 & 6 \\ 4}$ \; .
\end{center}
Lastly, an example of a generalized semi-standard skew tableau $T$ of shape $\lambda / \mu$ is
\begin{center}
$\tableau{&& 2 & 3 \\ 3 & 1 & 2 \\ 2}$
\end{center}
since the set of entries in $T$ is $\{1,2,3\}$.
\end{example}

In order to fully relate Definition \ref{tableaux} to Definition \ref{structures}, we will use the following standard definitions.

\begin{definition}\label{standard hooks} Let $\lambda / \mu$ be a skew shape. For any cell $(i, j) \in \lambda$, define the corresponding \emph{hook} $H_{(i, j)}$ to be:
$$H_{(i, j)} = \{(i', j') \in \lambda : i' \geq i \text{ and } j' \geq j \}$$
and the corresponding \emph{hook-length} $h_{(i,j)}$ to be:
$$h_{(i,j)} = |H_{(i,j)}|. $$
We also write $H_{(i,j)} = H_r$ and $h_{(i,j)} = h_r$ if $r = (i,j)$.
\end{definition}

\begin{example}\label{hook example} Consider the following skew shape $\lambda / \mu$, where $\lambda = (5, 4, 3, 3)$ and $\mu = (2,2,1)$. Moreover, let $r = (2,3)$ be the cell of $\lambda / \mu$ depicted below that is filled with a bullet. Then $H_r$ consists of the cells that are filled with asterisks or bullets, and $h_r = 4$.
\begin{center}
$\tableau{ && {} & {} & {} \\ && \bullet & * \\ & {} & * \\ {} & {} & *}$
\end{center}
\end{example}

Moreover, we define the following which will allow us to more clearly explain the special case of Definition \ref{structures} for skew shapes.

\begin{definition}\label{config} Let $\lambda / \mu$ be a skew shape. A \emph{configuration} of $\lambda / \mu$ is a function $f : \lambda / \mu \to \mathbb{N}$ from the cells of $\lambda / \mu$ to the positive integers so that if $r \in \lambda / \mu$, then $f(r) \in \mathbb{N}$ and $f(r) \leq h_r$. Moreover, let $T$ be a generalized semi-standard skew tableau of shape $\lambda / \mu$ and let $f$ be a configuration of $\lambda / \mu$. Then $T$ \emph{satisfies} $f$ if for all cells $r \in \lambda / \mu$, the entry in cell $r$ of $T$ is the $k^{th}$ smallest, where $k = f(r)$, entry in the set of entries of $T$ that are located in the hook $H_r$.
\end{definition}

\begin{example} Consider the following generalized semi-standard skew tableau of shape $\lambda = (4,3,2)$.
\begin{center}
$\tableau{1 & 2 & 3 & 3 \\ 1 & 2 & 3 \\ 3 & 3 }$
\end{center}
It can be checked that the above tableau satisfies the configuration $f$ of $\lambda$ defined by $f(r) = 1$ for all $r \in \lambda$.
\end{example}

Edelman and Greene introduced the following variant of standard Young tableaux. 

\begin{definition}\label{balanced tableaux}(Edelman and Greene, \cite{BT}) Let $\lambda$ be a normal shape containing $n$ cells. Then a \emph{balanced tableau of shape $\lambda$} is a bijective filling of the cells of $\lambda$ with numbers from $[n]$ such that if $(i,j) \in \lambda$ and if $i'$ is the largest positive integer such that $(i',j) \in \lambda$, if $k = i' - i + 1$, and if $S_{i,j}$ is the set of entries $m$ such that $m$ is contained in a cell in $H_{(i,j)}$, then the entry in cell $(i,j)$ of $\lambda$ is the $k^{th}$ smallest entry of $S_{i,j}$.
\end{definition}

\begin{example} Let $\lambda = (4,3,2)$. Then a balanced tableau of shape $\lambda$ is as follows.
\begin{center}
$\tableau{4 & 5 & 8 & 3 \\ 6 & 7 & 9 \\ 1 & 2 }$
\end{center}
For instance, let $i = 2$ and $j = 1$. Then the entry contained in cell $(i,j)$ of $\lambda$ is $6$. Moreover, the largest integer $i'$ such that $(i',j) \in \lambda$ is $3$, $k = i' - i + 1 = 3 - 2 + 1 = 2$, $H_{(i,j)} = \{(2,1), (2,2), (2,3), (3,1) \}$, and $S_{i,j} = \{1, 6, 7, 9\}$. Lastly, the $k^{th}$ smallest entry of $S_{i,j}$ is $6$, which is the entry in cell $(2,1)$ of the above tableau.
\end{example}

Now, we define the following stronger form of the marriage condition that was defined by Chang \cite{SDRC} and Hirst and Hughes in \cite{US}.

\begin{definition}\label{shellable}(cf. (\cite{US}, Theorem 4)) Let $n \in \mathbb{N}$, let $\mathcal{F}$ be a finite family of subsets of $[n]$, and write $m = |\mathcal{F}|$. Then $\mathcal{F}$ is \emph{shellable} if there exists a bijection $\sigma_{\mathcal{F}} : [m] \to \mathcal{F}$ such that for all $k \in [m]$,
\begin{equation}\label{shell}
\bigg{|} \bigcup_{i=1}^k \sigma_\mathcal{F}(i) \bigg{|} = k.
\end{equation}
\end{definition}

Informally, $\sigma_{\mathcal{F}}$ maps each $k$ to a subset, such that the union of the first $k$ subsets has cardinality $k$.

\begin{remark}\label{exactly one transversal} Shellable families of sets are connected to Theorem \ref{marriage theorem}. Chang (\cite{SDRC}, Theorem 1) noted that a simple consequence of Hall Jr.'s work (\cite{HMTE}, Theorem 2) is that a finite family $\mathcal{F}$ of subsets of $[n]$ has exactly one transversal if and only if $\mathcal{F}$ is shellable. Later on, Hirst and Hughes showed that this can be proved using a subsystem of second order arithmetic called $RCA_0$ \cite{US} and proved an extension of this result involving infinite families of finite sets in the context of reverse mathematics. From the aforementioned characterization of finite families of subsets of $[n]$ that have exactly one transversal, we have, by Theorem \ref{marriage theorem}, that all shellable families satisfy the marriage condition.
\end{remark}

\begin{remark} The term shellable is not used in \cite{SDRC}, \cite{HMTE}, and \cite{US}. However, we use this terminology because Definition \ref{shellable} resembles the definition of a \emph{shellable pure $d$-dimensional simplicial complex} (\cite{CCG}, Appendix A2.4, Definition A2.4.1). The differences between Definition \ref{shellable} and Definition A2.4.1 are as follows. The sets in Definition \ref{shellable} do not require additional conditions on the cardinalities of the members of $\mathcal{F}$. Also, in Definition A2.4.1, the requirement of the existence of a bijection $\sigma_{\mathcal{F}} : [m] \to \mathcal{F}$ as described in Definition \ref{shellable} is relaxed to requiring the existence of a certain bijection from a subset of $[m]$ to a subset of $\mathcal{F}$.
\end{remark}

\begin{remark}\label{total order} When describing the members of a shellable family, we will use total orderings of the form $\leq_{\sigma, i}$, defined below, on the members of that family. Let $\mathcal{F}$ be a shellable family of subsets of $[n]$ and let $m = |\mathcal{F}|$. For every bijection $\sigma : [m] \to \mathcal{F}$ that satisfies Equation \ref{shell} in Definition \ref{shellable} for all $k \in [m]$, define two total orderings $\leq_{\sigma, 0}$ and $\leq_{\sigma, 0}$ on the members of $\mathcal{F}$ by defining, for all members $F', F'' \in \mathcal{F}$, $F' <_{\sigma, 0} F''$ if $\sigma^{-1}(F') < \sigma^{-1}(F'')$ and $F' <_{\sigma,1} F''$ if $\sigma^{-1}(F'') < \sigma^{-1}(F')$. The \emph{shelling order} of a shellable complex from (\cite{CCG}, Appendix A2.4, Definition A2.4.1) is a variant of the total orderings just mentioned. \\
\end{remark}

Many families of sets that satisfy the marriage condition are not shellable. For instance, the family $\mathcal{F} = \{F_1, F_2\}$ from Example \ref{transversal example}, where $F_1 = \{1,2\}$ and $F_2 = \{1,2\}$, satisfies the marriage condition but is not shellable. However, many families of sets that satisfy the marriage condition are shellable. For instance, we explain why the notions we are introducing generalize Edelman and Greene's balanced tableaux and standard skew tableaux. Let $\lambda$ be a Young diagram. Then an \emph{inner corner of $\lambda$} is a cell $r \in \lambda$ such that deleting $r$ from $\lambda$ results in another Young diagram \cite{Sagan}. For instance, if $\lambda = (4,2,2)$, then the inner corners of $\lambda$ are the cells filled with bullets.
\begin{center}
$\tableau{{} & {} & {} & \bullet \\ {} & {} \\ {} & \bullet}$
\end{center}
With this definition in mind, let $\lambda / \mu$ be a skew shape with $n$ cells, and consider the family of sets defined by $\mathcal{F} = \{H_r : r \in \lambda / \mu\}$. Then $\mathcal{F}$ is shellable. To see this, let $r_1, r_2, \dots, r_n$ be a sequence of cells in $\lambda / \mu$ that is obtained as follows. \\
\begin{itemize}
\item Let $r_1$ be an inner corner of $\lambda$. \\
\item If $1 \leq k < n$ and if $r_1$, $r_2$, $\dots$, $r_k$ have already been defined, then let $\lambda^{(k)}$ be the Young diagram that results from deleting cells $r_1$, $r_2$, $\dots$, and $r_k$ from $\lambda$, and let $r_{k+1}$ be an inner corner of $\lambda^{(k)}$. \\
\end{itemize}
Lastly, let $\lambda^{(n)} = \mu$. Define $\sigma_{\mathcal{F}} : [n] \to \mathcal{F}$ by letting $ \sigma_{\mathcal{F}}(k) = H_{r_k}$ for all $k \in [n]$. It can be checked that the bijection $\sigma_{\mathcal{F}}$ satisfies Equation \ref{shell}. Hence,  $\mathcal{F}$ is shellable by Definition \ref{shellable}. In particular, by Remark \ref{exactly one transversal}, $\mathcal{F}$ has a unique transversal. The unique transversal $t : \mathcal{F} \to \lambda / \mu$ of $\mathcal{F}$ is given by $t(H_r) = r$ for all $r \in \lambda / \mu$. \\

Let $\lambda / \mu$ be a skew shape with $n$ cells, consider the shellable family of sets $\mathcal{F} = \{H_r : r \in \lambda / \mu \}$, and let $t$ be the unique transversal of $\mathcal{F}$. Define the configuration $f_0$ of $t$ by $f_0(r) = 1$ for all $r \in \lambda / \mu$. It can be seen that the standard Young tableaux of shape $\lambda / \mu$ are the generalized standard skew tableaux of shape $\lambda / \mu$ that satisfy $f_0$. In particular, the standard skew tableau of shape $\lambda / \mu$ can be regarded as being the permutations $\sigma : [n] \to [n]$ that satisfy $f_0$. \\

Next, let $\lambda$ be a partition of $n$, consider the shellable family of sets $\mathcal{F} = \{H_r : r \in \lambda \}$, and let $t$ be the unique transversal of $\mathcal{F}$. Define the configuration $f$ of $t$ such that, for all $(i,j) \in \lambda$, if $i'$ is the largest positive integer such that $(i',j) \in \lambda$, then $f((i,j)) = i' - i + 1$. Then a balanced tableaux of shape $\lambda$ is a generalized skew tableau of shape $\lambda / \mu$ that satisfies $f$. In particular, balanced tableaux can be regarded as permutations $\sigma : [n] \to [n]$ that satisfy $f$. The aforementioned reformulation of the definition of balanced tableaux involving generalized skew tableaux was used in \cite{BT} as the definition of balanced tableaux; the special case of Definition \ref{config} for normal shapes also appears in \cite{BT} under a different name. Namely, Edelman and Greene call $f(r)$ the \emph{hook rank} of $r$. However, they only use hook ranks to define balanced tableaux. In this paper, we have a very different emphasis as we focus on properties of the configurations themselves. \\

Before proving the main result of this paper, we prove the following lemma. \\

\begin{lemma}\label{not empty} Let $n \in \mathbb{N}$ and assume that $\mathcal{F}$ is a shellable family of subsets of $[n]$ such that $|\mathcal{F}| = n$. Moreover, let $S \subseteq [n]$ be the set of elements $k \in [n]$ such that $k \in F$ for exactly one member $F$ of $\mathcal{F}$. Then $S$ is not empty.
\end{lemma}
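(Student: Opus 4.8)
The plan is to use the defining bijection $\sigma_{\mathcal{F}} : [n] \to \mathcal{F}$ from Definition \ref{shellable} and look at its last value. Write $F_k = \sigma_{\mathcal{F}}(k)$ for $k \in [n]$, so that $\left| \bigcup_{i=1}^k F_i \right| = k$ for every $k$; in particular $\left| \bigcup_{i=1}^n F_i \right| = n$, so the ground set $[n]$ is exhausted, and for each $k$ there is exactly one element of $[n]$ lying in $\bigcup_{i=1}^k F_i$ but not in $\bigcup_{i=1}^{k-1} F_i$ (with the convention that the empty union is $\emptyset$). Call this element $a_k$; then $k \mapsto a_k$ is a bijection $[n] \to [n]$, and $a_k \in F_k$ while $a_k \notin F_i$ for all $i < k$.

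First I would isolate the element $a_n$, the "newest" element introduced at the last step. By construction $a_n \in F_n$ but $a_n \notin F_i$ for every $i < n$, so $a_n$ belongs to the member $F_n$ and to no $F_i$ with $i < n$. Hence $a_n$ lies in exactly one member of $\mathcal{F}$, namely $F_n$, which shows $a_n \in S$ and therefore $S \neq \emptyset$. The one point to be slightly careful about is the multiset convention: $\mathcal{F}$ is treated as a multiset, so "exactly one member $F$" must be read with multiplicity, but since $\sigma_{\mathcal{F}}$ is a bijection onto $\mathcal{F}$ as a multiset, distinct indices correspond to distinct members even when the underlying sets coincide, and $a_n \notin F_i$ for $i<n$ rules out all other members regardless of multiplicity; so the argument goes through verbatim.

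The main (and only mild) obstacle is simply setting up the element $a_k$ correctly, i.e.\ verifying that $\left| \bigcup_{i=1}^k F_i \right| = k$ forces $\bigcup_{i=1}^k F_i$ to grow by exactly one element at each step and that this new element is not in any earlier $F_i$. This is immediate by induction on $k$: since $\bigcup_{i=1}^{k-1} F_i \subseteq \bigcup_{i=1}^k F_i$ and the two sets have sizes $k-1$ and $k$, their difference is a single element $a_k$, and $a_k \notin \bigcup_{i=1}^{k-1} F_i$ means precisely $a_k \notin F_i$ for all $i < k$. No deeper input is needed — in particular Theorem \ref{marriage theorem} and Remark \ref{exactly one transversal} are not required, only the combinatorial definition of shellability. (One can also phrase the conclusion in terms of the unique transversal $t$: the map $t$ forced by shellability satisfies $t(F_n) = a_n$, and $a_n$ is a representative used by no competing member, but this reformulation is not needed for the statement as given.)
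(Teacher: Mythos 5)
Your proof is correct and takes essentially the same route as the paper: both isolate the element added at the last step of the shelling order, i.e.\ an element of $\bigcup_{i=1}^{n} F_i \setminus \bigcup_{i=1}^{n-1} F_i$, and observe that it lies in $F_n$ and in no earlier member, hence belongs to $S$. Your handling of the multiset convention and of the $n=1$ case via the empty-union convention is a minor tidying of the paper's argument (which treats $n=1$ separately), not a different method.
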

\begin{proof} If $n = 1$, then $\mathcal{F} = \{\{1\}\}$, implying that $S = \{1\}$. So assume without loss of generality that $n \geq 2$. Because $\mathcal{F}$ is shellable, there is an ordering $\mathcal{F} = \{F_1, F_2, \dots, F_n \}$ of the members of $\mathcal{F}$ that satisfies Equation \ref{shell} of Definition \ref{shellable}. In particular, $|\bigcup_{i = 1}^{n-1} F_i| = n - 1$ and $|\bigcup_{i=1}^n F_i| = n$. Let $k \in \bigcup_{i=1}^n F_i \; \backslash \; \bigcup_{i=1}^{n-1} F_i$. Then $k \in F_n$ but,  for all $1 \leq i \leq n-1$, $k \notin F_i$. Hence, $k$ is contained in exactly one member of $\mathcal{F}$ and so $k \in S$. It follows that $S$ is not empty.
\end{proof}

\begin{remark} A family $\mathcal{F}$ of subsets of $[n]$ such that $|\bigcup_{F \in \mathcal{F}} F| = |\mathcal{F}| = n$ is called a \emph{critical block} in \cite{HMTE}. In \cite{HMTE}, Hall Jr. used this notion as a very important ingredient in extending Hall's Marriage Theorem to infinite families of finite sets.
\end{remark}

Now, we prove the main result of this paper. \\

\begin{theorem}\label{good marriage} Let $n \in \mathbb{N}$, let $\mathcal{F}$ be a family of subsets of $[n]$ such that $|\mathcal{F}| = n$, assume that $\mathcal{F}$ satisfies the marriage condition, and let $t$ be a transversal of $\mathcal{F}$. Moreover, let $S \subseteq [n]$ be the set of elements $k \in [n]$ such that $k \in F$ for exactly one member $F$ of $\mathcal{F}$. Lastly, let $m$ be an integer satisfying
$$\min(n, n - |S| + 1) \leq m \leq n.$$
Then $\mathcal{F}$ is shellable if and only if for all configurations $f$ of $t$, there exists a surjective map $\sigma : [n] \to [m]$ such that $\sigma$ satisfies $f$.
\end{theorem}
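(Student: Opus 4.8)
For the overall structure, the plan is to prove the two implications separately, the forward direction (shellable $\Rightarrow$ existence) being the substantial one. Throughout the forward direction I would first use the shelling to normalize: since $\mathcal{F}$ is shellable with $n$ members there is an enumeration $\mathcal{F}=\{F_1,\dots,F_n\}$ with $|F_1\cup\cdots\cup F_k|=k$, and letting $x_k$ be the unique new element contributed at step $k$, the assignment $F_k\mapsto x_k$ is a transversal, hence equals $t$ by uniqueness of transversals for shellable families (Remark \ref{exactly one transversal}); relabeling the ground set so that $x_k$ becomes $k$, I may assume $F_k\subseteq[k]$, $k\in F_k$, and $t(F_k)=k$. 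In this normalization the first occurrence of each element $j$ is $F_j$, so $S=\{j:j\notin F_k\text{ for all }k>j\}$, and one checks that configurations of $t$ and surjections satisfying them transport across the relabeling.

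For the forward direction I would first settle the case $m=n$, i.e.\ produce a permutation $\pi$ with $\pi(k)$ equal to the $f(k)$-th smallest element of $\pi(F_k)$ for every $k$. I would build $\pi$ by assigning the values $1,2,\dots,n$ one at a time: having placed the values $<v$ on a set $U_{v-1}$, I set $\pi^{-1}(v)$ to be the \emph{largest} element $k\notin U_{v-1}$ with $|F_k\cap U_{v-1}|=f(k)-1$. Choosing the largest such $k$ is the key point: since $F_k\subseteq[k]$, once $k$ becomes eligible all subsequent placements land on elements $>k$, hence outside $F_k$, so an unplaced $k$ can never overshoot (always $|F_k\cap U_{v-1}|\le f(k)-1$). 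Combined with the observation that the smallest unplaced element $k_0$ has $F_{k_0}\cap U_{v-1}=F_{k_0}\setminus\{k_0\}$, so it must in fact be eligible, this shows the procedure never stalls, exhausts $[n]$, and yields a permutation satisfying $f$.

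To pass from $m=n$ to $n-|S|+1\le m<n$ I would post-compose $\pi$ with a carefully chosen order-preserving surjection $\phi:[n]\to[m]$. Since $|S|\ge1$ by Lemma \ref{not empty} and $n-m\le|S|-1$, I can pick $P\subseteq S\setminus\{\pi^{-1}(n)\}$ with $|P|=n-m$; let $\phi$ be the order-preserving surjection whose fibers are the connected components of the graph on $[n]$ with edge set $\{\{\pi(s),\pi(s)+1\}:s\in P\}$ (a subforest of the path, hence exactly $m$ components), and set $\sigma=\phi\circ\pi$. Then $\sigma$ is surjective, and it still satisfies $f$: writing $\pi(F_k)=\{p_1<\cdots<p_r\}$ so that $\pi(k)=p_{f(k)}$, one must check $\phi$ is injective on $p_1,\dots,p_{f(k)}$. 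If two of these lay in one block $[a,b]$, then — because every element of $\{a,\dots,b-1\}$ is $\pi(s)$ for some $s\in P\subseteq S$, and $s\in F_k$ forces $s=k$ — a short case analysis (on whether such a $p_j$ equals $b$ or lies in the interior) forces $\pi(k)$ to equal two distinct values among the $p_i$, a contradiction. This step is the one I expect to be the main obstacle, since it is exactly where the hypothesis $m\ge n-|S|+1$ is used and where one must identify which ``collisions'' of values are harmless.

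For the reverse direction I would argue the contrapositive. If $\mathcal{F}$ is not shellable then, having a transversal, it has at least two, $t$ and $t'$, so $t'\circ t^{-1}$ is a non-identity permutation of $[n]$ and has a cycle $(a_1\,a_2\cdots a_\ell)$ with $\ell\ge2$; writing $F^{(i)}=t^{-1}(a_i)$ gives distinct members with $a_i,a_{i+1}\in F^{(i)}$ (indices mod $\ell$). Define $f$ by $f(a_i)=|F^{(i)}|$ for each $i$ and $f\equiv1$ elsewhere; this is a valid configuration of $t$. If some surjective $\sigma:[n]\to[m]$ satisfied $f$, then for each $i$ the condition on $F^{(i)}$ forces $\sigma$ to be injective on $F^{(i)}$ with $\sigma(a_i)=\max\sigma(F^{(i)})$, hence $\sigma(a_i)>\sigma(a_{i+1})$; going around the cycle yields $\sigma(a_1)>\cdots>\sigma(a_\ell)>\sigma(a_1)$, a contradiction. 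Thus no such $\sigma$ exists, and this direction uses nothing about $m$ beyond $m\le n$.
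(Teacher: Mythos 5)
Your proposal is correct, but it takes a genuinely different route from the paper in both directions. For shellable $\Rightarrow$ existence, the paper argues by strong induction on $n$: it splits $\mathcal{F}$ into $\mathcal{F}_0=\{F: t(F)\le m-1\}$ and $\mathcal{F}_1=\{F: t(F)\ge m\}$ (after arranging $S$ to sit at the top of $[n]$), proves $\mathcal{F}_0$ is shellable by an exchange argument on the shelling order, gets a permutation of $[m-1]$ satisfying $f|_{[m-1]}$ from the induction hypothesis, and extends it to a surjection $[n]\to[m]$ by an explicit case construction ($\kappa'$, $\kappa''$). Your argument is instead non-inductive and algorithmic: the normalization $F_k\subseteq[k]$, $t(F_k)=k$, a greedy placement of the values $1,\dots,n$ at the largest eligible index (your no-overshoot and no-stall invariants are sound: eligibility of an unplaced $k$ persists because later placements occur at indices $>k$, hence outside $F_k$, and the smallest unplaced index is always eligible since $f(k)\le |F_k|$), followed by the reduction of general $m$ to $m=n$ by merging the adjacent values $\pi(s),\pi(s)+1$ for $s$ ranging over an $(n-m)$-subset $P\subseteq S$; the injectivity check you flag as the main obstacle is in fact immediate from your own observations (a non-maximal element of a block of $\phi$ equals $\pi(s)$ with $s\in S\cap F_k$, forcing $s=k$, contradicting that $\pi(k)=p_{f(k)}$ is the largest of $p_1,\dots,p_{f(k)}$), so that step closes cleanly and shows exactly where $m\ge n-|S|+1$ is used. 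For existence $\Rightarrow$ shellable, the paper gives a self-contained argument using only the single configuration $f_1(t(F))=|F|$, repeatedly extracting the member whose transversal value under $\sigma'$ is maximal; this proves the formally stronger fact that satisfiability of $f_1$ alone forces shellability, which the paper reuses in Corollary \ref{nice permutations}. You argue the contrapositive via the cited unique-transversal characterization (Remark \ref{exactly one transversal}): two transversals produce a cycle, and the configuration equal to $|F^{(i)}|$ on the cycle members and $1$ elsewhere is unsatisfiable. That is correct and shorter, but it imports the external Chang/Hirst--Hughes theorem (which you also invoke in the normalization to identify $t$ with the canonical transversal of the shelling), whereas the paper's version is elementary and yields the single-configuration strengthening needed later; your forward direction, in exchange, is more constructive and avoids the paper's somewhat intricate induction.
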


\begin{example} We give an example in which the lower bound $\min(n, n - |S| + 1)$ cannot be improved on. Consider $\lambda = (3,2,1)$. Next, let $\mathcal{F} = \{H_r : r \in \lambda \}$ and let $t$ be the transversal of $\mathcal{F}$ defined by $t(H_r) = r$ for all $r \in \lambda$. As discussed earlier, $\mathcal{F}$ is shellable. Now, let $f$ be the configuration of $\lambda$ be defined by $f((1,1)) = 5$, $f((1,2)) = 3$, $f((1,3)) = 1$, $f((2,1)) = 3$, $f((2,2)) = 1$, and $f((3,1)) = 1$. This configuration $f$ is depicted below, where we fill every cell $r \in \lambda$ with $f(r)$.
\begin{center}
$\tableau{5 & 3 & 1 \\ 3 & 1 \\ 1 }$
\end{center}

There is exactly one cell in the Young diagram $\lambda$, the cell $(1,1)$, that is contained in exactly one member of $\mathcal{F} =  \{H_r : r \in \lambda \}$. Hence, $S = \{(1,1)\}$ and $\min(n, n - |S| + 1) = n - |S| + 1$. With this in mind, set $n = 6$ and, as $n - |S| + 1 = 6 - 1 + 1 = 6$, assume that $m$ is an integer satisfying $1 \leq m \leq 5$. Suppose that there exists a generalized skew tableau $T$ of shape $\lambda$, and with entries from $[m]$, such that $T$ satisfies the configuration $f$ defined above. The cells $(1,1)$, $(1,2)$ and $(2,1)$ are cells $r \in \lambda$ that satisfy $f(r) = h_r$. Moreover, because $T$ satisfies $f$, $f(r) = h_r$ implies that no two entries of $T$ in $H_r$ are the same and that the entry of $T$ in cell $r$ is the $h_r^{th}$ smallest element of the set of entries of $T$ that are contained in $H_r$. \\

So consider the cell $(2,2)$ of $\lambda$. Since $m \leq 5$, some two entries of $T$ in $H_{(1,1)}$ are the same, or the entry of $T$ in cell $(2,2)$ equals to the entry of $T$ in some other cell, $(i_1,j_1)$, in $\lambda$. Since $f((1,1)) = 5 = h_{(1,1)}$, no two entries of $T$ in $H_{(1,1)}$ are the same. So the entry of $T$ in cell $(2,2)$ equals to the entry of $T$ in some other cell, $(i_1,j_1)$, in $\lambda$. If $(i_1, j_1) = (1,1)$, then the entry of $T$ in cell $(2,2)$ of $\lambda$ is larger than the entries of $T$ in cells $(1,2)$ and $(2,1)$ of $\lambda$. But that is impossible by the above. If $(i_1, j_1) = (2,1)$ or if $(i_1, j_1) = (3,1)$, then the entry of $T$ in cell $(2,1)$ of $\lambda$ is the $k^{th}$ smallest element of the set of entries of $T$ that are contained in $H_{(2,1)}$ for some $k \leq 2$. But that is impossible by the above. By symmetry, it is impossible for $(i_1, j_1) = (2,2)$ or for $(i_1, j_1) = (1,3)$. Hence, we have reached a contradiction. It follows that there is no such generalized skew tableau $T$.
\end{example}

\begin{proof} Let $n$, $\mathcal{F}$, $t$, $S$, and $m$ be as described in the theorem. First assume that for all configurations $f$ of $t$, there exists a surjective map $\sigma : [n] \to [m]$ that satisfies $f$. If $n = 1$, then the only family of $\{1\}$ with a transversal is the family $\mathcal{F} = \{ \{1\} \}$, which is shellable. \\

So assume without loss of generality that $n \geq 2$. Consider the configuration $f_1$ of $t$ defined by $f_1(t(F)) = |F|$ for all $F \in \mathcal{F}$.  By assumption, there exists a surjective map $\sigma' : [n] \to [m]$ that satisfies $f_1$. Moreover, let $k \in [n-1]$, and assume that we can fix an ordering $\mathcal{F} = \{F'_i : i \in [n] \}$ of $\mathcal{F}$ so that the following holds for all integers $0 \leq j \leq k - 1$.
\begin{equation}\label{shellability condition from max index}
\bigg{|}\bigcup_{i=1}^{n-j} F'_i\bigg{|} = n - j
\end{equation}
Note that Equation \ref{shellability condition from max index} holds if $k = 1$ because the fact that $\mathcal{F}$ has a transversal implies that $\bigcup_{F \in \mathcal{F}} F = [n]$. \\

Next, let $1 \leq s \leq n - k + 1$ satisfy
\begin{equation}\label{maximal index}
\sigma'(t(F'_s)) \; = \max_{1 \leq j \leq n - k + 1} \sigma'(t(F'_j)).
\end{equation}
Suppose that there exists an element $j \in [n]$ such that $1 \leq j \leq n - k + 1$, $j \neq s$, and $t(F'_s) \in F'_j$. By Equation \ref{maximal index}, $\sigma'(t(F'_j)) \leq \sigma'(t(F'_s))$. So as $t(F'_s) \in F'_j$ and $t(F'_s) \neq t(F'_j)$,  it follows that for some $1 \leq \ell \leq |F'_j| - 1$, $\sigma'(t(F'_j))$ is an $\ell^{th}$ smallest element of $\sigma'(F'_j)$. But then, as $f_1(t(F'_j)) = |F'_j|$, $\sigma'$ does not satisfy $f_1$, contradicting the assumption that $\sigma'$ satisfies $f_1$. \\

Hence, $t(F_s') \notin F_i'$ for all $1 \leq i \leq n - k + 1$ satisfying $i \neq s$. In particular, fix an ordering $\mathcal{F} = \{F''_i : i \in [n] \}$ of the members of $\mathcal{F}$ so that $F''_i = F'_i$ if $i > n - k + 1$ and $F''_{n-k+1} = F'_s$, where $s$ is as described in the above paragraph. By Equation \ref{shellability condition from max index} and the fact that $t(F_s') \notin F_i'$ for all $1 \leq i \leq n - k + 1$ satisfying $i \neq s$, it follows that this ordering of the members of $\mathcal{F}$ satisfies the following equation for all integers $0 \leq j \leq k$.
$$\bigg{|}\bigcup_{i=1}^{n-j} F''_i\bigg{|} = n - j $$
As the choice of $k \in [n-1]$ is arbitrary, it follows that there exists an ordering $\mathcal{F} = \{F_1, F_2, \dots, F_n\}$ of $\mathcal{F}$ such that
$$ \bigg{|} \bigcup_{i=1}^k F_i \bigg{|} = k $$
for all $1 \leq k \leq n$. Hence, $\mathcal{F}$ satisfies Equation \ref{shell} of Definition \ref{shellable}. So, by Definition \ref{shellable}, $\mathcal{F}$ is shellable. \\

Next, assume that $\mathcal{F}$ is shellable. We proceed by strong induction on $n$. Because $\mathcal{F}$ is shellable, we will use the total orderings as described in Remark \ref{total order} to describe the members of this family. \\

If $n = 1$, then the only family of subsets of $\{1\}$ with a transversal is the family $\mathcal{F} = \{\{1\}\}$. Moreover, with $t$ being the transversal of $\mathcal{F}$ defined by mapping $\{1\}$ to $1$, the only configuration $f$ that satisfies $t$ is the function $f : \{1\} \to \mathbb{N}$ defined by $f(1) = 1$, $S = \{1\}$, $\min(n, n - |S| + 1) = 1$, and the surjective map $\sigma : \{1\} \to \{1\}$ satisfies $f$. \\

So assume that $n \geq 2$ and let $f$ be a configuration of $t$. Since $S$ is not empty by Lemma \ref{not empty}, $\min(n, n - |S| + 1) = n - |S| + 1$, implying that and $n - |S| + 1 \leq m \leq n$. Assume without loss of generality that
\begin{equation}\label{S indexing}
S = \{n - m' + 1, n - m' + 2, \dots, n \}
\end{equation}
for some $m' \in [n]$. If $m = 1$, then $n - |S| + 1 \leq 1$, implying that $n = |S|$. Hence, if $m = 1$, then, as $|\mathcal{F}| = n$ and $S = [n]$, every element of $[n]$ is contained in exactly one element of $\mathcal{F}$. In particular, if $m = 1$, then $\mathcal{F} = \{\{k\} : k \in [n]\}$, $t(\{k\}) = k$ for all $k \in [n]$, the only configuration $f$ of $t$ is the map defined by $f(k) = 1$ for all $k \in [n]$, and the surjective map $\sigma : [n] \to [m]$, defined by $\sigma(k) = 1$ for all $k \in [n]$, satisfies $f$. So assume without loss of generality that $m \geq 2$. \\

Since $n - |S| + 1 \leq m \leq n$, $m$ satisfies the inequality $n - m '+ 1 \leq m \leq n$. As $\mathcal{F}$ is shellable, there is an ordering $\mathcal{F} = \{F'_1, F'_2, \dots, F'_n\}$ of the members of $\mathcal{F}$ such that 
\begin{equation}\label{another shellable ordering}
\bigg{|} \bigcup_{i=1}^k F'_i \bigg{|} = k
\end{equation}
for all $1 \leq k \leq n$. Define the following subfamilies of $\mathcal{F}$, 
$$ \mathcal{F}_0 = \{F \in \mathcal{F} : t(F) \leq m - 1 \}$$
and
$$ \mathcal{F}_1 = \{F \in \mathcal{F} : t(F) \geq m\}. $$

We first prove that $\mathcal{F}_0$ is shellable. Because $S$ is the set of elements $k \in [n]$ such that $k \in F$ for exactly one member $F$ of $\mathcal{F}$, Equation \ref{S indexing} and the fact that $n - m' + 1 \leq m \leq n$ implies that for all $m \leq k \leq n$, $k$ is contained in exactly one member of $\mathcal{F}$ and that for all $F \in \mathcal{F}_1$, 
$$|F \cap \{m, m+1, \dots, n\}| = 1.$$ 
In particular, $\mathcal{F}_0$ is an $(m - 1)$-member family of subsets of $[m-1]$. \\

Assume that there exists an integer $1 \leq j \leq n-1$ such that $F'_j \in \mathcal{F}_1$ and $F'_{j+1} \in \mathcal{F}_0$. Write 
$$X = \bigcup_{i=1}^{j-1} F'_i,$$
where we assume that $X = \emptyset$ if $j = 1$. Since $F'_j \in \mathcal{F}_1$, $t(F'_j) \in \{m, m+1, \dots, n\}$ and no member of $\mathcal{F}$ other than $F'_j$ contains $t(F'_j)$. Moreover, by Equation \ref{another shellable ordering}, $|F'_j \cup X| = |X| + 1$. So as $t(F'_j) \in F'_j$, it follows that $[m-1] \cap (F'_j \cup X) = [m-1] \cap X$. Since $F'_{j+1} \in \mathcal{F}_0$, $F'_{j+1} \subseteq [m-1]$. Moreover, by Equation \ref{another shellable ordering}, $|X \cup F'_j \cup F'_{j+1}| = |X \cup F'_j| + 1$. It follows that $F'_{j+1} \backslash X = F'_{j+1} \backslash (X \cup F'_j) = \{k\}$ for some $k \in [m-1]$, implying that 
\begin{equation}\label{transposition}
|F'_{j+1} \cup X| = |X| + 1.
\end{equation}
So the ordering $\mathcal{F} = \{F''_1, F''_2, \dots, F''_n\}$ of the members of $\mathcal{F}$, such that $F''_j = F'_{j+1}$, $F''_{j+1} = F'_j$, and $F''_i = F'_i$ for all $i \in [n] \backslash \{j, j+1\}$, satisfies the following by Equation \ref{another shellable ordering} and Equation \ref{transposition}. For all $1 \leq k \leq n$, 
\begin{equation}
\bigg{|} \bigcup_{i=1}^k F''_i \bigg{|} = k.
\end{equation}
Furthermore, $F''_j \in \mathcal{F}_0$ and $F''_{j+1} \in \mathcal{F}_1$. If there exists an integer $1 \leq j' \leq n - 1$ such that $F''_{j'} \in \mathcal{F}_1$ and $F''_{j'+1} \in \mathcal{F}_0$, then argue again as above. Repeating this argument at most a finite number of times, we obtain an ordering $\mathcal{F} = \{F_1, F_2, \dots, F_n\}$ of the members of $\mathcal{F}$ where
\begin{equation}\label{yet another shelling}
\bigg{|} \bigcup_{i=1}^k F_i \bigg{|} = k
\end{equation}
for all $1 \leq k \leq n$, $\mathcal{F}_0 = \{F_k : 1 \leq k \leq m-1 \}$, and $\mathcal{F}_1 = \{F_k : m \leq k \leq n\}$. In particular, Equation \ref{yet another shelling} holds for all $1 \leq k \leq m-1$, implying that $\mathcal{F}_0$ satisfies Equation \ref{shell} of Definition \ref{shellable}. It follows, by Definition \ref{shellable}, that $\mathcal{F}_0$ is a shellable family of subsets of $[m-1]$. \\

So consider the ordering $\mathcal{F} = \{F_1, F_2, \dots, F_n\}$ of the members of $\mathcal{F}$ as above and assume without loss of generality that for all $m \leq i \leq n$, $t(F_i) = i$. Let $t'$ be the transversal of $\mathcal{F}_0$ defined by $t'(F) = t(F)$ for all $F \in \mathcal{F}_0$. Moreover, let $f' = f|_{[m-1]}$, where $f|_{[m-1]}$ denotes the restriction of $f$ to $[m - 1]$. In particular, $f'$ is a configuration of $t'$. \\

Since it is assumed in the theorem that $\min(n, n - |S| + 1) \leq m \leq n$, and since a surjective map $\sigma : [n] \to [m]$ is a permutation if $m = n$, the following holds. Because $\mathcal{F}_0$ is shellable, because $|\mathcal{F}_0| = m-1$, and because $|[m-1]| < n$, the induction hypothesis implies that there exists a permutation $\sigma' : [m-1] \to [m-1]$ such that $\sigma'$ satisfies $f'$. \\

If there exists an integer $m \leq j \leq n$ such that $f(j) = |F_j|$, then there exists a surjective map $\kappa' : [n] \to [m]$ such that $\kappa'(i) = i$ for all $1 \leq i \leq m-1$ and the following two properties hold for all $m \leq i \leq n$. \\
\begin{itemize}
\item If $f(i) = |F_i|$, then $\kappa'(i) = m$. \\
\item If $f(i) < |F_i|$, then $\kappa'(i)$ is equal to the $f(i)^{th}$ smallest element of $\sigma'(F_i \backslash \{i\})$. \\
\end{itemize}
Otherwise, if $f(i) < |F_i|$ for all $m \leq i \leq n$, then there exists a surjective map $\kappa'' : [n] \to [m]$ such that $\kappa''|_{[m-1]}$ is injective and order-preserving and such that the following two properties hold. \\
\begin{itemize}
\item If $m \leq i < n$, then $\kappa''(i)$ is equal to $f(i)^{th}$ smallest element of $\kappa''(\sigma'(F_i \backslash \{i\}))$. \\
\item If $i = n$, then $\kappa''(i) \notin \kappa''([m-1])$ and $\kappa''(i)$ is equal to the $f(i)^{th}$ smallest element of $\kappa''(i) \cup \kappa''(\sigma'(F_i \backslash \{i\}))$.\\
\end{itemize}

So define a surjective map $\kappa : [n] \to [m]$ as follows. If there exists an integer $m \leq j \leq n$ such that $f(j) = |F_j|$, then define $\kappa = \kappa'$. Otherwise,  if $f(i) < |F_i|$ for all $m \leq i \leq n$, define $\kappa = \kappa''$. Now, define the map $\sigma : [n] \to [m]$ by
$$\sigma(i) = \begin{cases}
\kappa(\sigma'(i)) & \text{ if } 1 \leq i \leq m-1 \\
\kappa(i) & \text{ if } m \leq i \leq n.
\end{cases} $$
Because $\sigma' : [m-1] \to [m-1]$ is a bijection, the definition of $\kappa$ implies that $\sigma$ is surjective. Moreover, because $\sigma'$ satisfies $f'$ and because, for all integers $m \leq i \leq n$, $i$ is contained in exactly one member of $\mathcal{F}$ and $F_i \cap \{m, m+1, \dots, n\} = \{i\}$, the definition of $\kappa$ and the definition of $\sigma$ imply that $\sigma$ satisfies $f$. From this, the theorem follows.
\end{proof}

A natural consequence of the above is the following which, combined with Theorem \ref{good marriage}, gives another characterization of shellable families of sets.

\begin{corollary}\label{nice permutations}  Let $\mathcal{F}$ be a family of subsets of $[n]$ such that $|\mathcal{F}| = n$, assume that $\mathcal{F}$ satisfies the marriage condition, and let $t$ be a transversal of $\mathcal{F}$. Moreover, let $S$ be as in Theorem \ref{good marriage}. Lastly, let $f_0$ be the configuration of $t$ defined by $f_0(t(F)) = 1$ for all $F \in \mathcal{F}$. Then $f_0$ is satisfied by some permutation $\sigma : [n] \to [n]$ if and only if for all integers $n - |S| + 1 \leq m \leq n$ and for all configurations $f$ of $t$, there exists a surjective map $\sigma : [n] \to [m]$ that satisfies $f$.
\end{corollary}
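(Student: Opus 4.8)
The plan is to sandwich the equivalence between two occurrences of ``$\mathcal{F}$ is shellable'' and then quote Theorem \ref{good marriage}. The only step that is not immediate from that theorem is the implication that satisfiability of $f_0$ by a \emph{permutation} already forces shellability, so that is what I would prove first. Here $|\mathcal{F}| = n$ and $t$ is injective, so $t$ is a bijection $\mathcal{F} \to [n]$ and so is $\sigma \circ t$; I would list $\mathcal{F} = \{G_1, \dots, G_n\}$ with $\sigma(t(G_i)) = n + 1 - i$. Since $\sigma$ satisfies $f_0$, $\sigma(t(G_i))$ is the least element of $\sigma(G_i)$, whence $\sigma(G_i) \subseteq \{n+1-i, \dots, n\} = \sigma(\{t(G_1), \dots, t(G_i)\})$; injectivity of $\sigma$ gives $G_i \subseteq \{t(G_1), \dots, t(G_i)\}$, so $\bigcup_{i=1}^{k} G_i \subseteq \{t(G_1), \dots, t(G_k)\}$ has at most $k$ elements for every $k$. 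The marriage condition applied to the subfamily $\{G_1, \dots, G_k\}$ gives the reverse inequality, so $k \mapsto G_k$ satisfies Equation (\ref{shell}) and $\mathcal{F}$ is shellable.

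Granting this, the ``only if'' direction runs as follows: if some permutation satisfies $f_0$ then $\mathcal{F}$ is shellable by the above, hence $S \neq \emptyset$ by Lemma \ref{not empty} and $\min(n, n - |S| + 1) = n - |S| + 1$; for each $m$ with $n - |S| + 1 \le m \le n$ and each configuration $f$ of $t$, Theorem \ref{good marriage} then provides a surjective map $\sigma : [n] \to [m]$ satisfying $f$. For the ``if'' direction I would read off the hypothesis at $m = n$: every configuration $f$ of $t$ is satisfied by some surjective map $[n] \to [n]$, so $\mathcal{F}$ is shellable by Theorem \ref{good marriage}, and then applying the hypothesis (or Theorem \ref{good marriage} again) with $f = f_0$ and $m = n$ yields a surjective map $[n] \to [n]$, that is, a permutation, satisfying $f_0$.

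I do not expect a genuine difficulty: the substance is entirely in Theorem \ref{good marriage}, and the new lemma above is a short sorting argument. The one thing to watch is the index range: $\{m : n - |S| + 1 \le m \le n\}$ contains $m = n$ exactly when $S \neq \emptyset$, which is why Lemma \ref{not empty} appears in the ``only if'' step and why in the ``if'' step the quantifier over $m$ must be understood to include $m = n$ (equivalently, with lower bound $\min(n, n - |S| + 1)$ as in Theorem \ref{good marriage}); apart from that degenerate case, the statement is a clean corollary.
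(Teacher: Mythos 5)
Your proof is correct, but the key step is handled differently from the paper. The paper never argues directly with $f_0$: it passes to the configuration $f_1(t(F)) = |F|$ via the complementation $\sigma'(i) = n - \sigma(i) + 1$ (so $f_0$ is permutation-satisfiable iff $f_1$ is), and then reuses the first half of the proof of Theorem \ref{good marriage}, which already shows that satisfiability of $f_1$ by a surjective map forces shellability; the rest is quoting the other direction of that theorem, as you do. Your replacement is a self-contained sorting lemma: order $\mathcal{F} = \{G_1,\dots,G_n\}$ by decreasing $\sigma(t(G_i))$, use that $\sigma(t(G_i))$ is the minimum of $\sigma(G_i)$ together with injectivity of $\sigma$ to get $G_i \subseteq \{t(G_1),\dots,t(G_i)\}$, and conclude Equation (\ref{shell}). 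This is valid (in fact, since $t(G_i) \in G_i$, the inclusion $\{t(G_1),\dots,t(G_k)\} \subseteq \bigcup_{i\le k} G_i$ already gives equality, so you do not even need to invoke the marriage condition there), and it buys independence from the internals of Theorem \ref{good marriage}'s proof, at the cost of not recording the $f_0 \leftrightarrow f_1$ duality that the paper's complementation trick makes explicit. One further remark: in the ``only if'' direction Lemma \ref{not empty} is not strictly needed, since any $m$ with $n - |S| + 1 \le m \le n$ automatically satisfies $m \ge \min(n, n-|S|+1)$, so Theorem \ref{good marriage} applies as is. Finally, the caveat you raise about the ``if'' direction is genuine but is a defect of the statement shared by the paper's own proof: when $S = \emptyset$ the quantifier over $m$ is vacuous and the literal equivalence can fail (e.g.\ $\mathcal{F} = \{\{1,2\},\{1,2\}\}$), and the paper's ``From this, the corollary follows'' silently uses $m = n$ just as you do; your proposal to read the lower bound as $\min(n, n-|S|+1)$ is the right repair.
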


\begin{example} Consider a skew shape $\lambda / \mu$ with $n$ cells, and let $S$ denote the set of cells of $\lambda / \mu$ that are contained in exactly one member of $\{H_r : r \in \lambda / \mu\}$. The elements of $S$ are also known as the \emph{outer corners} of $\mu$ \cite{Sagan}. Clearly, there exists a standard skew tableau of shape $\lambda / \mu$. Corollary \ref{nice permutations} implies that such a tableau exists if and only if for all integers $n - |S| + 1 \leq m \leq n$ and for all configurations $f$ of $\lambda / \mu$, there exists a generalized semi-standard skew tableau $T$ of shape $\lambda / \mu$, with $[m]$ as the set of entries of $T$, such that $T$ satisfies $f$.
\end{example}

\begin{proof} Let $f_1$ be the configuration of $t$ defined by $f_1(t(F)) = |F|$ for all $F \in \mathcal{F}$. Then a permutation $\sigma : [n] \to [n]$ satisfies $f_0$ if and only if the permutation $\sigma' : [n] \to [n]$ defined by $\sigma'(i) = n - \sigma(i) + 1$ for all $i \in [n]$ satisfies $f_1$. In particular, $f_0$ is satisfied by some permutation if and only if $f_1$ is. The first half of the proof of Theorem \ref{good marriage} implies that if $f_1$ is satisfied by some permutation $\sigma : [n] \to [n]$, then $\mathcal{F}$ is shellable. Furthermore, by Theorem \ref{good marriage}, if $\mathcal{F}$ is shellable, then for all integers $n - |S| + 1 \leq m \leq n$ and for all configurations $f$ of $t$, there exists a surjective map $\sigma : [n] \to [m]$ that satisfies $f$. From this, the corollary follows.
\end{proof}

A well-known formula is the \emph{hook-length formula}, first proved by Frame, Robinson, and Thrall \cite{FRT}. It is as follows. If $\lambda$ is a normal shape with $n$ cells, then the number of standard Young tableaux of shape $\lambda$ equals
$$\dfrac{n!}{\prod_{r \in \lambda} h_r}.$$
Moreover, the above formula was also proved by Edelman and Greene to equal the number of balanced tableaux of shape $\lambda$ \cite{BT}. In our context, we will show that the above formula is connected to the configurations that we are investigating. \\

Let $S(n,m)$ denote the Stirling number of the second kind. Namely, let $S(n,m)$ denote the number of set partitions of $[n]$ into $m$ parts. Let $\mathcal{F}$ be a family of sets of $[n]$ that satisfies the marriage condition, let $m \in n$, and let $t$ be a transversal of $\mathcal{F}$. If $f$ is a configuration of $t$, then let $A_{n,m}(f)$ denote the number of surjective maps $\sigma : [n] \to [m]$ that satisfy $f$. Moreover, let $X$ is the set of configurations $f$ of $t$ such that $A_{n,m}(f) \geq 1$. Then define the \emph{average value of $A_{n,m}(f)$ over all configurations $f$ of $t$ satisfying $A_{n,m}(f) \geq 1$} to be
$$\frac{1}{|X|} \sum_{f \in X} A_{n,m}(f) $$
if $|X| > 0$, and $0$ otherwise.

\begin{theorem}\label{theorem : partial product formula} Let $\mathcal{F}$ be a shellable family of subsets of $[n]$ such that $|\mathcal{F}| = n$ and let $t$ be the transversal of $\mathcal{F}$. Moreover, let $S \subseteq [n]$ be the set of elements $k \in [n]$ such that $k \in F$ for exactly one member $F$ of $\mathcal{F}$, and let $m$ be an integer satisfying 
$$ n - |S| + 1 \leq m \leq n.$$
Then the average value of $A_{n,m}(f)$ over all configurations $f$ of $t$ satisfying
$$A_{n,m}(f) \geq 1$$
is
\begin{equation}\label{equation : partial product formula}
\dfrac{m! \, S(n,m)}{\prod_{F \in \mathcal{F}} |F|}. 
\end{equation}
\end{theorem}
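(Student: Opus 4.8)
The plan is to compute the numerator and the denominator of the average separately, each by an elementary counting argument, and then to identify the index set of the average using Theorem \ref{good marriage}.

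First I would record the key observation that \emph{every} surjective map $\sigma : [n] \to [m]$ satisfies exactly one configuration of $t$. Since $|\mathcal{F}| = n$ and $t : \mathcal{F} \to [n]$ is injective, $t$ is a bijection; hence for each $F \in \mathcal{F}$ the value $\sigma(t(F))$ lies in the nonempty set $\sigma(F)$ and is therefore its $k^{th}$ smallest element for a unique $k$, with $k \leq |\sigma(F)| \leq |F|$. Setting $f_\sigma(t(F)) = k$ for every $F$ (and defining $f_\sigma$ arbitrarily off the image of $t$, which in any case is all of $[n]$) produces the unique configuration of $t$ that $\sigma$ satisfies. Summing over all surjections, and using the standard fact that there are $m!\,S(n,m)$ surjective maps $[n] \to [m]$, this gives $\sum_{f} A_{n,m}(f) = m!\,S(n,m)$, where $f$ ranges over all configurations of $t$; and because configurations with $A_{n,m}(f) = 0$ contribute nothing to this sum, it also equals $\sum_{f \in X} A_{n,m}(f)$.

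Next I would compute $|X|$. Since $S$ is nonempty by Lemma \ref{not empty}, we have $\min(n, n - |S| + 1) = n - |S| + 1$, so the hypothesis $n - |S| + 1 \leq m \leq n$ puts us in the situation of Theorem \ref{good marriage}; its "shellable $\Rightarrow$" direction then shows that $A_{n,m}(f) \geq 1$ for \emph{every} configuration $f$ of $t$, so $X$ is exactly the set of all configurations of $t$. A configuration of $t$ is a function $f : [n] \to \mathbb{N}$ subject only to $f(t(F)) \leq |F|$ for each $F$; since $t$ is a bijection, these constraints are independent, and each $F$ contributes a free choice in $\{1, \dots, |F|\}$, whence $|X| = \prod_{F \in \mathcal{F}} |F|$. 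This product is at least $1$ because shellable families consist of nonempty sets, so the degenerate "$0$ otherwise" case in the definition of the average never arises. Dividing, the average equals $m!\,S(n,m) / \prod_{F \in \mathcal{F}} |F|$, as claimed.

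I do not anticipate a serious obstacle. The only point needing a little care is the first step: verifying that $\sigma \mapsto f_\sigma$ is well defined, that $f_\sigma$ is genuinely a configuration (the inequality $k \leq |F|$), and that $\sigma$ satisfies no configuration other than $f_\sigma$, together with the bookkeeping that lets the sum over $X$ be replaced by the sum over all configurations. Everything beyond that is a direct appeal to Theorem \ref{good marriage} and to the enumeration of surjections by $m!\,S(n,m)$.
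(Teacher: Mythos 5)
Your proposal is correct and follows essentially the same route as the paper: the paper isolates your first step as Lemma \ref{generalized converse} (each surjection $[n]\to[m]$ satisfies exactly one configuration of $t$), counts surjections by $m!\,S(n,m)$ and configurations by $\prod_{F \in \mathcal{F}} |F|$, and invokes Theorem \ref{good marriage} to see that every configuration is satisfied by at least one surjection, exactly as you do. Your added care about $t$ being a bijection and the nondegeneracy of the average is a harmless elaboration of the same argument.
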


\begin{remark} Theorem \ref{theorem : partial product formula} implies the following consequence relating to how the values $A_{n,m}(f)$ are distributed. By Theorem \ref{good marriage}, every configuration $f$ of $t$ is satisfied by at least one surjective map $\sigma : [n] \to [m]$. Hence, by Theorem \ref{theorem : partial product formula} and the fact that $A_{n,m}(f) \geq 1$ always holds, it follows that for all constants $k \geq 1$ the number of configurations $f$ of $t$ that satisfy
$$A_{n,m}(f) \; \leq \; k \cdot \dfrac{m! \; S(n,m)}{\prod_{F \in \mathcal{F}} |F|} $$
is at least
$$\bigg{(}1 - \frac{1}{k}\bigg{)} \prod_{F \in \mathcal{F}} |F|.$$
\end{remark}

\begin{remark}\label{Stirling properties} Expression \ref{equation : partial product formula} is relatively easy to calculate. It is well-known that the Stirling numbers of the second kind satisfy the following explicit formula and the following recurrence relation for all integers $1 \leq m \leq n$ \cite{GuoQi}:
\begin{equation}\label{Stirling explicit formula}
S(n,m) = \dfrac{1}{m!} \sum_{i=0}^m (-1)^i {m \choose i} (m - i)^n
\end{equation}
\begin{equation}\label{Stirling recurrence}
S(n,m) = m \, S(n-1,m) + S(n-1,m-1).
\end{equation}
By Equation \ref{Stirling explicit formula}, it follows that Expression \ref{equation : partial product formula} may be re-written as
\begin{equation}\label{alternative expression}
 \dfrac{1}{\prod_{F \in \mathcal{F}} |F|} \; \sum_{i=0}^m (-1)^i {m \choose i} (m - i)^n.
\end{equation}
In particular, Expression \ref{equation : partial product formula} can be computed with $\mathcal{O}(n^2)$ arithmetic operations. If the difference $n - m$ is fixed, then $S(n,m)$ can be expressed with a closed-form formula. To see this, consider the sequence $(p_k(x))_{k=0,1,2,\dots}$ of polynomials in $\mathbb{Q}[x]$ such that $p_0(x) = 1$ and, for all $k$,
$$p_{k+1}(x) - p_{k+1}(x-1) = x \, p_k(x). $$
If $k = n-m$, then by the recurrence given in Equation \ref{Stirling recurrence}, $S(n,m) = p_k(m)$. So as $k$ is assumed to be fixed, $p_k(x)$ gives a closed-form expression for $S(n,m)$. For instance, Expression \ref{equation : partial product formula} becomes
$$ \dfrac{m!}{\prod_{F \in \mathcal{F}} |F|} $$
if $n = m$,
$$ {m + 1 \choose 2} \; \dfrac{m!}{\prod_{F \in \mathcal{F}} |F|}. $$
if $n = m + 1$, and
$$  \dfrac{1}{2}\,{m + 1 \choose 2} \bigg{(}{m + 1 \choose 2} + \frac{2m + 1}{3} \bigg{)} \; \dfrac{m!}{\prod_{F \in \mathcal{F}} |F|}. $$
if $n = m + 2$.
\end{remark}

In order to prove Theorem \ref{equation : partial product formula}, we prove the following.

\begin{lemma}\label{generalized converse} Let $m, n \in \mathbb{N}$ such that $m \leq n$, and let $\mathcal{F}$ be a family of subsets of $[n]$ that has a transversal $t : \mathcal{F} \to [n]$ such that $t$ is surjective. Then every surjective function $\sigma : [n] \to [m]$ satisfies exactly one configuration $f$ of $t$.
\end{lemma}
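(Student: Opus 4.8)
The plan is to exhibit the configuration explicitly and then argue it is forced. First I would record the structural consequence of the hypotheses: since a transversal is injective by Definition~\ref{transversal} and $t$ is assumed surjective onto $[n]$, the map $t : \mathcal{F} \to [n]$ is a bijection; in particular $|\mathcal{F}| = n$ and for every $j \in [n]$ there is a unique member of $\mathcal{F}$, call it $F_j$, with $t(F_j) = j$, and moreover $j = t(F_j) \in F_j$.

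For existence, fix a surjective $\sigma : [n] \to [m]$ and define $f : [n] \to \mathbb{N}$ by letting $f(j)$ be the rank of $\sigma(j)$ inside the set $\sigma(F_j)$, that is, $f(j) = |\{ x \in \sigma(F_j) : x \leq \sigma(j) \}|$. Since $\sigma(j) \in \sigma(F_j)$ we have $f(j) \geq 1$, and since $\sigma(F_j)$ has at most $|F_j|$ elements we have $f(j) \leq |\sigma(F_j)| \leq |F_j| = |F_{t(F)}|$ whenever $j = t(F)$; hence $f$ is a configuration of $t$ in the sense of Definition~\ref{structures}. By the very definition of $f$, for each $F \in \mathcal{F}$ the value $\sigma(t(F))$ is the $f(t(F))^{th}$ smallest element of $\sigma(F)$, so $\sigma$ satisfies $f$.

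For uniqueness, suppose $f'$ is any configuration of $t$ that $\sigma$ satisfies. Then for every $F \in \mathcal{F}$ the element $\sigma(t(F))$, which is a fixed member of the finite totally ordered set $\sigma(F)$, is simultaneously its $f'(t(F))^{th}$ smallest element and (from the previous paragraph) its $f(t(F))^{th}$ smallest element; since each element of a finite set of integers occupies exactly one position in the increasing order, $f'(t(F)) = f(t(F))$. As $t$ is surjective, this gives $f'(j) = f(j)$ for all $j \in [n]$, so $f' = f$.

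There is essentially no serious obstacle here; the only points requiring care are that surjectivity of $t$ is exactly what pins down a configuration on all of $[n]$ rather than merely on the image of $t$, and that the "$k^{th}$ smallest element" terminology in Definition~\ref{structures} refers to positions in a genuine set (with distinct elements), so the rank of a given element is unambiguous. Both are immediate once stated, which is why the argument is a direct verification.
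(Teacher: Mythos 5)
Your proof is correct and follows essentially the same route as the paper: both define $f$ by letting $f(t(F))$ be the rank of $\sigma(t(F))$ in $\sigma(F)$ and then use surjectivity of $t$ to conclude that any two configurations satisfied by $\sigma$ must agree at some $t(F)$, forcing uniqueness. Your version is slightly more explicit in verifying that the constructed $f$ is indeed a configuration (i.e., $f(t(F)) \leq |F|$), but the argument is the same.
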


\begin{proof} Let $\sigma : [n] \to [m]$ be a surjective map. Then $\sigma$ satisfies the configuration $f$ of $t$ defined by letting, for all $F \in \mathcal{F}$, $f(t(F)) = k$ where $\sigma(t(F))$ is the $k^{th}$ smallest element of the set $\sigma(F)$. Now, suppose that $\sigma$ satisfies more than one configuration of $t$. Then, let $f_1$ and $f_2$ be two distinct configurations of $t$. Because $f_1 \neq f_2$ and because $t$ is surjective, there is an element $F \in \mathcal{F}$ such that $f_1(t(F)) \neq f_2(t(F))$. So write $k_1 = f_1(t(F))$ and write $k_2 = f_2(t(F))$. Since $\sigma$ satisfies $f_1$, Definition \ref{structures} implies that $\sigma(t(F))$ is the $k_1^{th}$ smallest element of $\sigma(F)$. Moreover, since $\sigma$ satisfies $f_2$, Definition \ref{structures} implies that $\sigma(t(F))$ is the $k_2^{th}$ smallest element of $\sigma(F)$. However, this is impossible because $k_1 = f_1(t(F)) \neq f_2(t(F)) = k_2$.
\end{proof}

Now, we prove Theorem \ref{theorem : partial product formula}.

\begin{proof} By Definition \ref{structures} , the total number of configurations of $\mathcal{F}$ equals to $\prod_{F \in \mathcal{F}} |F|$. Moreover, it is well-known that number of surjective maps from $[n]$ to $[m]$ is given by $m! \, S(n,m)$. By Lemma \ref{generalized converse}, every surjective map is satisfied by exactly one configuration. Moreover, by Theorem \ref{good marriage}, every configuration of $\mathcal{F}$ is satisfied by some surjective map from $[n]$ to $[m]$. From this, the theorem follows.
\end{proof}

\begin{example}  Let $\lambda = (6,5,4,3,2,1)$ and $\mu = (1)$. The skew shape $\lambda / \mu$ is depicted below.
\begin{center}
$\tableau{ & & {} & {} & {} & {} \\ & {} & {} & {} & {} \\ {} & {} & {} & {} \\ {} & {} & {} \\ {} & {} \\ {} }$
\end{center}
Since $\lambda / \mu$ has eighteen cells, let $n = 18$. The cells of $\lambda / \mu$ that are contained in exactly one member of the family $\mathcal{F} = \{H_r : r \in \lambda \}$ are $(1,3)$, $(2,2)$, and $(3,1)$. Hence, $S = \{(1,3), (2,2), (3,1) \}$ and $n - |S| + 1 = n - 2$. So let $m = n - 2 = 16$. Then by Theorem \ref{theorem : partial product formula} and Remark \ref{Stirling properties}, the average value of $A_{n,m}(f)$ over all configurations $f$ of $\lambda / \mu$ that satisfy $A_{n,m}(f) \geq 1$ is given by
$$  \dfrac{1}{2}\,{m + 1 \choose 2} \bigg{(}{m + 1 \choose 2} + \frac{2m + 1}{3} \bigg{)} \; \dfrac{m!}{\prod_{F \in \mathcal{F}} |F|} $$
$$ =  \dfrac{1}{2}\,{16 + 1 \choose 2} \bigg{(}{16 + 1 \choose 2} + \frac{2 \cdot 16 + 1}{3} \bigg{)} \; \dfrac{16!}{\prod_{F \in \mathcal{F}} |F|} $$
$$ =  \dfrac{1}{2}\,{17 \choose 2} \bigg{(}{17 \choose 2} + 11 \bigg{)} \; \dfrac{16!}{(7 \cdot 5 \cdot 3 \cdot 1)^3 \cdot 5 \cdot 3 \cdot 1 \cdot 3 \cdot 1 \cdot 1} $$
$$ = 4014814003 + \dfrac{1}{5}. $$
\end{example}

\begin{remark}\label{av appl} Theorem \ref{theorem : partial product formula} is versatile. For instance, possible applications of the special case of Theorem \ref{theorem : partial product formula} in the case of permutations are as follows. There is a formula for the number of standard skew tableaux of shape $\lambda / \mu$, known as Naruse's formula. Asymptotic properties of Naruse's formula were analysed by Morales, Pak, and Panova in \cite{AST}. In particular \cite{AST}, it turns out that in general, the number of standard skew tableaux of shape $\lambda / \mu$ divided by
$$ \dfrac{n!}{\prod_{r \in \lambda / \mu} h_r}, $$
where $n$ is the number of cells of $\lambda / \mu$,
can be arbitrarily large. Hence, we can apply Theorem \ref{theorem : partial product formula} to Naruse's formula and, using the work of Morales, Pak, and Panova in \cite{AST}, analyse lower bounds on the number of configurations $f$ of $\lambda / \mu$ such that $A_{n,n}(f) \geq 1$ and $A_{n,n}(f)$ is strictly less than
$$ \dfrac{n!}{\prod_{r \in \lambda / \mu} h_r}. $$
\end{remark}

\begin{remark} Regarding Remark \ref{av appl}, there are variants and generalizations of Naruse's formula, the formula described in Remark \ref{av appl}, for shapes known as \emph{skew shifted shapes} that are known \cite{GK, NO}. What we observe about these shapes is that the ``hook-sets'' for skew shifted shapes as defined in \cite{GK, NO} also form examples of shellable families. Hence, the results in this section can be replicated verbatim to include \emph{skew shifted shapes}. Moreover, it is claimed by Morales, Pak, and Panova in \cite{AST} that their analysis of Naruse's formula can be extended to skew shifted shapes. It also appears that we can even extend the above to involve posets known as \emph{$d$-complete posets} \cite{NO}, as there is a generalization of Naruse's formula for such posets and the ``hook-sets'' in these formulas are a generalization of the ``hook-sets'' for the skew shifted shapes \cite{NO}.
\end{remark} 

Lastly, we note that a special case of our work has also been considered in the literature by Viard. We derived our work independently of Viard.

\begin{remark} Consider a finite subset $S$ of $\mathbb{N}^2$. Next, for all $r = (i,j) \in S$, define $F_r = \{ (i_1,j) \in S : i_1 \geq i \} \cup \{ (i, j_1) : j_1 \geq j \}$, and define $\mathcal{F}' = \{F_r : r \in S \}$. This construction is related to the tools we used in Section 3 for the following reason. By using the same explanation as the one we gave for why $\{H_r : r \in \lambda / \mu\}$ is shellable, we observe that $\mathcal{F}'$ is shellable and that its unique transversal $t'$ is defined by $t'(F_r) = r$ for all $r \in S$. \\

Let $S$, $\mathcal{F}'$, and $t'$ be as described in the above paragraph. Viard \cite{ViardPaper, ViardThesis} considered objects that are equivalent to configurations of $t'$ and permutations that satisfy such configurations. Viard \cite{ViardPaper, ViardThesis} asserted that he has established one direction of a special case of Theorem \ref{good marriage} by claiming to have proved a statement equivalent to asserting that all configurations $f$ of $t'$ are satisfied by at least one permutation $\sigma : S \to S$. In particular, using his claim, he derives two consequences that implies the special case of Theorem \ref{theorem : partial product formula} for permutations $\sigma : S \to S$ and configurations $f$ of $t'$. There are two versions of his arguments (a less general version in \cite{ViardPaper} and a more general version in \cite{ViardThesis}), both versions are different from our proof of Theorem \ref{good marriage}.
\end{remark}


\section*{Acknowledgements}\label{sec:acknow} The author would like to thank Stephanie van~Willigenburg for her guidance and advice during the development of this paper. Furthermore, the author gratefully acknowledges an anonymous referee who identified a gap in the first half of the proof of Theorem 3.14  in an older draft of this paper.
\bibliographystyle{amsplain}

\end{document}